\numberwithin{equation}{section}
\newtheorem{thm}{Theorem}[section]
\newtheorem{lem}[thm]{Lemma}
\newtheorem*{definition}{Definition}
\newtheorem{ex}[thm]{Example}
\newtheorem{rem}[thm]{Remark}
\newcommand{\hp}{\mathbb{H}}
\newcommand{\Z}{\mathbb{Z}}
\newcommand{\C}{\mathbb{C}}
\newcommand{\Q}{\mathbb{Q}}
\newcommand{\F}{\mathbb{F}}
\newcommand{\SL}{\operatorname{SL}_2(\Z)}
\renewcommand{\mod}[1]{\,(\mathrm{mod}\,{#1})}
\newcommand{\leg}[2]{\left(\frac{#1}{#2}\right)}
\title[Non-existence of simple congruences for quotients of Eisenstein series]{On the non-existence of simple congruences for quotients of Eisenstein series}
\author{Michael Dewar}
\begin{document}

\begin{abstract}
A recent article of Berndt and Yee found congruences modulo $3^{k}$ for certain ratios of Eisenstein series.  For all but one of these, we show there are no simple congruences $a(\ell n+c)\equiv 0\mod \ell$ when $\ell\geq 13$ is prime.  This follows from a more general theorem on the non-existence of congruences in $E_{2}^{r}E_{4}^{s}E_{6}^{t}$ where $r\geq 0$ and $s,t\in\Z$.
\end{abstract}

\maketitle

\section{Introduction}
Define $p(n)$ to be the number of ways of writing $n$ as a sum of non-increasing positive integers.  Ramanujan famously established the congruences
\begin{align*}
p(5n+4)&\equiv 0\mod 5\\
p(7n+5)&\equiv 0\mod 7\\
p(11n+6)&\equiv 0\mod {11}
\end{align*}
and noted that there does not appear to be any other prime for which the partition function has equally simple congruences.  Ahlgren and Boylan~\cite{ahlboy2003a} build on the work of Kiming and Olsson~\cite{kimols1992a} to prove that there truly are no other such primes.  For large enough primes $\ell$, Sinick~\cite{sin2009a} and the author~\cite{dew2009a} prove the non-existence of simple congruences
\begin{align*}
a(\ell n+c) \equiv 0\mod \ell
\end{align*}
for wide classes of functions $a(n)$ related to the coefficients of modular forms.  However, all of the modular forms studied in \cite{ahlboy2003a}, \cite{sin2009a} and \cite{dew2009a} are non-vanishing on the upper half plane.  Here we prove the non-existence of simple congruences (when $\ell$ is large enough) for ratios of Eisenstein series.

Let $\sigma_{m}(n) := \sum_{d|n} d^{m}$ and define the Bernoulli numbers $B_{k}$ by $\frac{t}{e^{t}-1} = \sum_{k=0}^{\infty} B_{k}\frac{t^{k}}{k!}$.  For even $k\geq 2$, set
\begin{align*}
E_{k}(\tau) &:= 1- \frac{2k}{B_{k}} \sum_{n=1}^{\infty} \sigma_{k-1}(n) q^{n}.
\end{align*}
Note that $E_{2}\equiv E_{4} \equiv E_{6}\equiv 1$ modulo 2 and 3.
Berndt and Yee \cite{beryee2002a} prove congruences for the quotients of Eisenstein series in Table~\ref{BYsummary} below, where $F(q):=\sum a(n)q^{n}$.
\begin{table}[t]\label{BYsummary}
\caption{Congruences of Berndt and Yee~\cite{beryee2002a}}
\begin{center}
\begin{tabular}{|c|c|c|}\hline
$F(q)$ & $n\equiv 2 \mod 3$ & $n\equiv 4\mod 8$\\\hline\hline
$1/E_{2}$ & $a(n)\equiv 0 \mod{3^{4}}$ & \\\hline
$1/E_{4}$ & $a(n)\equiv 0 \mod{3^{2}}$ & \\\hline
$1/E_{6}$ & $a(n)\equiv 0 \mod{3^{3}}$ & $a(n)\equiv 0 \mod{7^{2}}$\\\hline
$E_{2}/E_{4}$ & $a(n)\equiv 0 \mod{3^{3}}$ & \\\hline
$E_{2}/E_{6}$ & $a(n)\equiv 0 \mod{3^{2}}$ & $a(n)\equiv 0 \mod{7^{2}}$\\\hline
$E_{4}/E_{6}$ & $a(n)\equiv 0 \mod{3^{3}}$ & \\\hline
$E_{2}^{2}/E_{6}$ & $a(n)\equiv 0 \mod{3^{5}}$ & \\\hline
\end{tabular}
\end{center}
\label{default}
\end{table}
An obviously necessary requirement for the congruences in the $n\equiv2\mod 3$ column of Table~\ref{BYsummary} is that there are simple congruences of the form $a(3n+2)\equiv 0\mod 3$.  All but the first form in Table~\ref{BYsummary} are covered by the following theorem.

\begin{thm}\label{main thm}
Let $r\geq 0$ and $s,t\in\Z$.  If $E_{2}^{r}E_{4}^{s}E_{6}^{t}=\sum a(n)q^{n}$ has a simple congruence $a(\ell n+c)\equiv 0 \mod \ell$ for the prime $\ell$, then either $\ell \leq 2r +8|s|+12|t| + 21$ or $r=s=t=0$.
\end{thm}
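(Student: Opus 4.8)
The plan is to translate a simple congruence $a(\ell n + c)\equiv 0\mod\ell$ into a statement about the $U_\ell$ operator acting on the mod-$\ell$ reduction of the weight-$w$ meromorphic modular form $f=E_2^r E_4^s E_6^t$, where $w=2r+4s+6t$. First I would observe that $E_2$ is not modular, but $E_2\equiv E_\ell^{(p)}$-type replacements or, more directly, that modulo $\ell$ one has $E_2 \equiv E_{\ell+1}/E_{\ell-1}$ is false; instead the cleanest route is: modulo $\ell$, the Hasse invariant gives $E_{\ell-1}\equiv 1$, and $E_2$ can be handled because $f\cdot E_{\ell-1}^N$ for suitable $N$ is a genuine (meromorphic) modular form whose $q$-expansion is $\equiv f\mod\ell$. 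The poles of $f$ come only from zeros of $E_4$ (at the order-$3$ elliptic point) and $E_6$ (at the order-$2$ elliptic point $i$); I would clear these by multiplying by an appropriate power of the discriminant $\Delta$ or, better, work in the framework of \cite{dew2009a}/\cite{sin2009a} where meromorphic modular forms with poles only at elliptic points or cusps are allowed. The goal of this first stage is to produce, modulo $\ell$, a modular form $g$ of some explicit weight $k\equiv w\mod(\ell-1)$, holomorphic away from a controlled divisor, with $\Theta$-cycle behavior we can analyze.

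Next I would invoke the key dichotomy driving all these non-existence results: a simple congruence $a(\ell n+c)\equiv 0\mod\ell$ forces the filtration of $f\mod\ell$ under the theta operator $\Theta = q\frac{d}{dq}$ to collapse in a way incompatible with $f$ being, modulo $\ell$, a unit times a power of the Hasse invariant. Concretely, following Ahlgren--Boylan and Sinick, the congruence implies that $U_\ell(f\mod\ell)$ lies in a space forcing $f\equiv$ (constant)$\mod\ell$ once $\ell$ exceeds the relevant weight bound; the weight bound is exactly where $2r+8|s|+12|t|+21$ enters, since clearing poles at the two elliptic points costs weights bounded by $8|s|$ (from $E_4^{-|s|}$, cleared by $\Delta^{|s|}$ roughly, contributing $12|s|$... here one must be careful) and $12|t|$, plus the $E_2$ correction costs a bounded amount, and the $+21$ absorbs the low-weight exceptional cases (weights $\le 21$ have no cusp forms forcing the argument, echoing the classical appearance of small weights). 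I would run the filtration argument: if $\ell > 2r+8|s|+12|t|+21$ then the only way the congruence holds is $f\equiv$ constant $\mod\ell$, and since the $q$-expansion of $E_2^rE_4^sE_6^t$ reduced mod $\ell$ is constant for all large $\ell$ only when $r=s=t=0$ (because for instance the coefficient of $q$ is $-2r\cdot 3 \cdot(\text{something}) - 240 s + 504 t$ type expression nonzero mod $\ell$ for large $\ell$ unless $r=s=t=0$), we are done.

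I would organize the proof into: (1) a lemma reducing mod $\ell$ to a meromorphic modular form and bounding its weight after clearing elliptic-point poles by the stated quantity; (2) a lemma (essentially quoting or adapting \cite{dew2009a}) that a simple congruence plus this weight bound forces triviality mod $\ell$; (3) a short computation showing triviality mod $\ell$ for all large $\ell$ implies $r=s=t=0$, using that $E_2$, $E_4$, $E_6$ are algebraically independent / have non-constant $q$-expansions with bounded-height coefficients. The main obstacle I expect is stage (1): handling $E_2$ (non-holomorphic, non-modular) and the negative exponents $s,t<0$ simultaneously, and doing so with a \emph{sharp enough} weight bound to land exactly on $2r+8|s|+12|t|+21$ rather than something larger. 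In particular, tracking how a pole of order $\sim|s|/3$ at the order-$3$ elliptic point (from $E_4^{s}$, $s<0$) gets cleared — one wants to multiply by $E_4^{|s|}$ or by $\Delta^{?}$ and control the weight increase — is delicate, and the factor $8|s|=2\cdot 4|s|$ versus a naive $12|s|$ suggests a clever choice (e.g. multiplying $E_4^{s}$, $s<0$, by $E_4^{|s|}E_6^{2|s|}/\Delta^{|s|}$-type corrections is wrong; more likely one uses that $E_4^{3}$ and $E_6^2$ differ by $1728\Delta$ so a single pole-order at the elliptic point can be cleared at cost $8$ rather than $12$). Pinning this down correctly is the crux; everything after it is an application of the established theta-cycle machinery.
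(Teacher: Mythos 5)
Your proposal correctly locates the problem in the Ahlgren--Boylan/Jochnowitz circle of ideas (theta operator, filtrations, reduction mod $\ell$), but several of its load-bearing steps are wrong or missing. First, the treatment of $E_2$ fails: multiplying by $E_{\ell-1}^{N}$ leaves the $q$-expansion unchanged mod $\ell$ but does nothing to repair the quasi-modularity of $E_2^{r}$, so $f\cdot E_{\ell-1}^{N}$ is \emph{not} a meromorphic modular form. The standard fact you talked yourself out of is the one that works: $E_2\equiv E_{\ell+1}\mod\ell$ as $q$-expansions, so one simply replaces $E_2^{r}$ by $E_{\ell+1}^{r}$. Second, the negative exponents do not require clearing poles at elliptic points with $\Delta$; the clean device is that $f$ and $fg^{\ell}$ have the same simple congruences mod $\ell$ (because $g^{\ell}\equiv g(q^{\ell})$-type expansions only shift indices by multiples of $\ell$), so one multiplies by $E_4^{\ell}E_6^{\ell}$ and studies the holomorphic form $E_{\ell+1}^{r}E_4^{\ell+s}E_6^{\ell+t}$, whose filtration is exactly $(r+10)\ell+(r+4s+6t)$. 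Your guess that the coefficients $8|s|$ and $12|t|$ reflect pole orders at elliptic points and the identity $E_4^3-E_6^2=1728\Delta$ is not where they come from: they are $2\cdot 4$ and $2\cdot 6$, the factor $2$ arising from the quantity $\tfrac{\ell+3}{2}$ in the filtration bookkeeping.

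Third, and most seriously, your central dichotomy --- that for $\ell$ beyond the weight bound the congruence forces $f\equiv\text{constant}\mod\ell$ --- is not the mechanism and is not true; the actual argument never concludes that $f$ is constant. What a simple congruence at $c\not\equiv 0$ really gives is the identity $\Theta^{\frac{\ell+1}{2}}f\equiv-\left(\frac{c}{\ell}\right)\Theta f\mod\ell$, which forces the Tate cycle to have exactly two low points with the filtration dropping by precisely $\left(\frac{\ell+1}{2}\right)(\ell-1)$ at each high point; writing $\omega(f)=A\ell+B$ this yields $\frac{\ell+1}{2}\leq B\leq A+\frac{\ell+3}{2}$, and feeding in $A=r+9$ (or $r+10$), $B\equiv r+4s+6t$ produces the bound on $\ell$ by a four-way case split on the sign and size of $r+4s+6t$. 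None of this quantitative step appears in your outline. Finally, the alternative $r=s=t=0$ enters through the degenerate case $\Theta f\equiv 0\mod\ell$ (when every $c\not\equiv 0$ trivially gives a simple congruence); this does not mean $f$ is constant mod $\ell$, and ruling it out for large $\ell$ requires the weight congruence $r+4s+6t\equiv 0\mod\ell$ together with the vanishing of the coefficients of $q$ \emph{and} $q^{2}$, which forces $r\equiv s\equiv t\equiv 0\mod\ell$ --- a genuinely separate computation your stage (3) does not supply.
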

This theorem gives an explicit upper bound on primes $\ell$ for which there can be congruences of the form $a(\ell n +c) \equiv 0 \mod{\ell^{k}}$ as in the middle column of Table~\ref{BYsummary}.

\begin{rem}
See Remark~\ref{bound improvement} for a slight improvement of Theorem~\ref{main thm} in some cases.
\end{rem}

\begin{ex}
The form ${E_{6}}/{E_{4}^{12}}$ can only have simple congruences for $\ell\leq 129$.  Of these, the primes $\ell=2$ and $3$ are trivial with $E_{4}\equiv E_{6} \equiv 1 \mod \ell$.  For the remaining primes, the only congruences are 
\begin{align*}
a(\ell n + c)\equiv 0\mod {17}, \text{ where } \leg{c}{17}=-1.
\end{align*}
\end{ex}

Mahlburg~\cite{mah2005a} shows that for each of the forms in Table~\ref{BYsummary} except $1/E_{2}$, there are infinitely many primes $\ell$ such that for any $i\geq 1$, the set of $n$ with $a(n)\equiv 0\mod {\ell^{i}}$ has arithmetic density 1.  On the other hand, our result shows that (for large enough $\ell$) every arithmetic progression modulo $\ell$ has at least one non-vanishing coefficient modulo $\ell$.

Section 2 recalls certain definitions and tools from the theory of modular forms.  Simple congruences are reinterpreted in terms of Tate cycles, which are reviewed in Section 3.  Section 4 proves Theorem~\ref{main thm}.

{\it Acknowledgments}:   The author would like to thank Scott Ahlgren for careful readings of this article and many helpful suggestions.

\section{Preliminaries}\label{section: Preliminaries}

A modular form of weight $k\in\Z$ on $\SL$ is a holomorphic function $f: \hp \to \C$ which satisfies
\begin{align*}
f\left(\frac{a\tau +b}{c\tau + d}\right) = (c\tau + d)^{k} f(\tau)
\end{align*}
for every $\left(\begin{array}{cc}a & b \\c & d\end{array}\right)\in\SL$, and which is holomorphic at infinity.  Modular forms have Fourier expansions in powers of  $q=e^{2\pi i \tau}$. For any prime $\ell\geq 5$, let $\Z_{(\ell)}=\{\frac{a}{b} \in \Q: \ell \nmid b\}$.  We denote the set of all weight $k$ modular forms on $\SL$ with $\ell$-integral Fourier coefficients by $M_{k}$.  Although $E_{k}$ is a modular form of weight $k$ whenever $k\geq 4$, $E_{2}$ is called a quasi-modular form since it satisfies the slightly different transformation rule
\begin{align*}
E_{2}\left(\frac{a\tau +b}{c\tau + d}\right) = (c\tau + d)^{2} E_{2}(\tau) -\frac{6ic}{\pi}(c\tau + d).
\end{align*}

\begin{definition}
If $\ell$ is a prime, a Laurent series $f = \sum_{n\geq N} a(n)q^{n}\in \Z_{(\ell)}(\!(q)\!)$ has a {\it simple congruence} at $c\mod\ell$ if $a(\ell n +c) \equiv 0\mod\ell$ for all $n$.
\end{definition}

\begin{lem}\label{replacement lemma}
Suppose that $\ell$ is prime and that $f= \sum a(n)q^{n}$ and $g=\sum b(n)q^{n}\in\Z_{(\ell)}(\!(q)\!)$ with $g\not\equiv 0\mod\ell$.  The series $f$ has a simple congruence at $c\mod\ell$ if and only if the series $fg^{\ell}$ has a simple congruence at $c\mod\ell$.
\end{lem}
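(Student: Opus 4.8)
The plan is to use the Frobenius endomorphism on power series modulo $\ell$, which sends $h=\sum c(n)q^n$ to $h^\ell \equiv \sum c(n)q^{\ell n}\mod\ell$. This identity is the crux: raising to the $\ell$-th power modulo a prime $\ell$ simply "dilates" the $q$-expansion by a factor of $\ell$. First I would record that for $g=\sum b(n)q^n\in\Z_{(\ell)}(\!(q)\!)$ we have $g^\ell\equiv\sum b(n)q^{\ell n}\mod\ell$, and consequently that the coefficients of $g^\ell$ supported off the progression $0\mod\ell$ all vanish modulo $\ell$, while the coefficient of $q^{\ell m}$ in $g^\ell$ is $\equiv b(m)\mod\ell$.

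Next I would compute the coefficients of $fg^\ell$ on the progression $c\mod\ell$. Writing $fg^\ell=\sum d(n)q^n$, the coefficient $d(\ell n+c)$ is a sum $\sum_{i+j=\ell n+c}a(i)\,(\text{coeff of }q^j\text{ in }g^\ell)$; modulo $\ell$ only the terms with $j\equiv 0\mod\ell$ survive, say $j=\ell m$, forcing $i\equiv c\mod\ell$, i.e.\ $i=\ell(n-m)+c$. Hence
\begin{align*}
d(\ell n + c) \equiv \sum_{m} a(\ell(n-m)+c)\, b(m) \mod\ell.
\end{align*}
If $f$ has a simple congruence at $c\mod\ell$, every $a(\ell(n-m)+c)\equiv 0$, so the right-hand side vanishes and $fg^\ell$ has a simple congruence at $c\mod\ell$. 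This gives the "only if" direction.

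For the converse, since $g\not\equiv 0\mod\ell$ and $\Z/\ell\Z$ is a field (so $(\Z/\ell\Z)(\!(q)\!)$ is an integral domain — here I would note that a nonzero Laurent series over a field is a unit, or at least not a zero divisor), the series $g^\ell\not\equiv 0\mod\ell$, hence its reduction is invertible in $(\Z/\ell\Z)(\!(q)\!)$. Thus $f\equiv (fg^\ell)\cdot (g^\ell)\inv\mod\ell$, and if $fg^\ell$ has a simple congruence at $c\mod\ell$ then so does $f$ by the same convolution computation applied with $fg^\ell$ in place of $f$ and $(g^\ell)\inv$ in place of $g^\ell$ — the key point being that $(g^\ell)\inv$, being a power series over $\Z/\ell\Z$, still has arbitrary support, so one must re-run the argument symmetrically rather than appeal to it directly. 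Alternatively, and more cleanly, I would prove the statement in the symmetric form: the coefficients of $f$ on the progression $c\mod\ell$, reduced mod $\ell$, are obtained from those of $fg^\ell$ by convolving with the coefficients of $(g^\ell)\inv=(g\inv)^\ell$, which are supported on $0\mod\ell$; this makes both directions a single computation. The only mild obstacle is the bookkeeping to make sure the convolution sums are finite (they are, since $f,g$ are Laurent series bounded below) and to justify invertibility of $g^\ell$ modulo $\ell$; neither is serious.
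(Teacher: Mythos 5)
Your proof is correct and follows essentially the same route as the paper: reduce $g^{\ell}$ modulo $\ell$ to the dilated series $\sum b(n)q^{\ell n}$ via Frobenius, observe that the resulting convolution preserves the residue class $c$ modulo $\ell$, and handle the converse by multiplying by $(g^{-1})^{\ell}=(g^{\ell})^{-1}$ and repeating the argument. The paper's proof is simply a terser version of your ``cleaner'' symmetric formulation.
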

\begin{proof}
It suffices to consider the reductions $\mod\ell$ of the series
\begin{align*}
\left(\sum a(n)q^{n}\right)\left(\sum b(n)q^{\ell n}\right) \equiv \sum_{n} \left( \sum_{m} b(m) a(n-\ell m) \right)q^{n} \mod\ell.
\end{align*}
If $a(n)$ vanishes when $n\equiv c\mod\ell$, then the inner sum on the right hand side will also vanish for $n\equiv c\mod\ell$.  The converse follows via multiplication by $\left(\sum b(n)q^{n}\right)^{-\ell}$ and repetition of this argument.
\end{proof}

Our main tool is Ramanujan's $\Theta$ operator
\begin{align*}
\Theta := \frac{1}{2\pi i} \frac{d}{d\tau} =  q \frac{d}{dq}.
\end{align*}
For any prime $\ell$ and any Laurent series $f= \sum a(n)q^{n}\in\Z_{(\ell)}(\!(q)\!)$, by Fermat's Little Theorem
\begin{align*}
\Theta^{\ell} f = \sum a(n) n^{\ell} q^{n} \equiv \sum a(n) n q^{n} = \Theta f \mod \ell.
\end{align*}
We call the sequence $\Theta f, \dots, \Theta^{\ell} f \mod \ell$ the Tate cycle of $f$. Note that $\Theta^{\ell-1} f \equiv f \mod \ell$ is equivalent to $f$ having a simple congruence at $0\mod\ell$.

We now recall some facts about the reductions of modular forms $\mod\ell$.  See Swinnerton-Dyer~\cite{swi1973a} Section 3 for the details on this paragraph.  There are polynomials $A(Q,R), B(Q,R) \in \Z_{(\ell)}[Q,R]$ such that
\begin{align*}
A(E_{4}, E_{6}) &=E_{\ell-1},\\
B(E_{4},E_{6}) &=E_{\ell+1}.
\end{align*}
Reduce the coefficients of these polynomials modulo $\ell$ to get $\tilde{A}, \tilde{B}\in \F_{\ell}[Q,R]$.  Then $\tilde{A}$ has no repeated factor and is prime to $\tilde{B}$.  Furthermore, the $\F_{\ell}$-algebra of reduced modular forms is naturally isomorphic to
\begin{align}\label{iso}
\frac{\F_{\ell}[Q, R]}{\tilde{A}-1}
\end{align}
via $Q\to E_{4}$ and $R\to E_{6}$.  Whenever a power series $f$ is congruent to a modular form, define the filtration of $f$ by
\begin{align*}
\omega(f) := \inf\{k : f\equiv g \in M_{k} \mod\ell\}.
\end{align*}
If $f\in M_{k}$, then for some $g\in M_{k+\ell +1}$, $\Theta f \equiv g \mod\ell$.  The next lemma also follows from~\cite{swi1973a} Section 3.

\begin{lem}\label{filtration of theta f}
Let $\ell \geq 5$ be prime, $f\in M_{k_{1}}$, $f\not\equiv 0 \mod \ell$ and $g\in M_{k_{2}}$.
\begin{enumerate}
\item If $f\equiv g\mod \ell$ then $k_1 \equiv k_2 \mod {\ell-1}$,
\item $\omega(\Theta f) \leq \omega(f) + \ell + 1$ with equality if and only if $\omega(f)\not\equiv 0\mod \ell$,
\item If $\omega(f)\equiv 0\mod\ell$, then for some $s\geq 1$,  $\omega(\Theta f) = \omega(f) +(\ell + 1) -s(\ell-1)$, and
\item $\omega(f^{i})=i\omega(f)$.
\end{enumerate}
\end{lem}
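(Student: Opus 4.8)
The plan is to work throughout in the graded structure supplied by the isomorphism \eqref{iso}. Under $Q\mapsto E_4$, $R\mapsto E_6$, assign $Q$ weight $4$ and $R$ weight $6$, so that a reduced modular form of weight $k$ is represented by a weighted-homogeneous polynomial of degree $k$ in $\F_\ell[Q,R]$. Since $\tilde A=\tilde E_{\ell-1}$ is homogeneous of degree $\ell-1$ and $E_{\ell-1}\equiv 1\mod\ell$, multiplying a representative by $\tilde A$ raises its degree by $\ell-1$ without changing the form; the standard description of the filtration coming from \eqref{iso} is that $\omega(f)$ equals the degree of the lowest-degree homogeneous representative $P$ of $f$, and that this minimal $P$ is characterized by $\tilde A\nmid P$ (if $\tilde A\mid P$ then $P/\tilde A$ represents the same form in lower degree, and no further reduction is possible once $\tilde A\nmid P$). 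Reducing the $\Z$-grading of $\F_\ell[Q,R]$ modulo $\ell-1$ makes $\tilde A-1$ homogeneous of degree $0$, so $\F_\ell[Q,R]/(\tilde A-1)$ is $\Z/(\ell-1)$-graded; statement (1) is then immediate, since $f\equiv g$ means they are the same nonzero element of this ring, hence lie in a single graded piece, forcing $k_1\equiv k_2\mod{\ell-1}$.

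For (4), write $f\equiv P$ with $P$ homogeneous of degree $\omega(f)$ and $\tilde A\nmid P$. Then $f^{i}\equiv P^{i}$, homogeneous of degree $i\,\omega(f)$. Because $\F_\ell[Q,R]$ is a UFD and $\tilde A$ is squarefree, each irreducible factor of $\tilde A$ is prime, so $\tilde A\mid P^{i}$ iff $\tilde A\mid P$; as $\tilde A\nmid P$ we get $\tilde A\nmid P^{i}$, and hence $P^{i}$ is already the minimal representative of $f^{i}$, giving $\omega(f^{i})=\deg P^{i}=i\,\omega(f)$.

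The heart of the proof is (2). Apply $\Theta$ to the representative $P$ of $f$ using Ramanujan's identities $\Theta E_4=(E_2E_4-E_6)/3$ and $\Theta E_6=(E_2E_6-E_4^{2})/2$ with the chain rule:
\[
\Theta f\equiv \frac{\partial P}{\partial Q}\,\frac{E_2Q-R}{3}+\frac{\partial P}{\partial R}\,\frac{E_2R-Q^{2}}{2}\mod\ell .
\]
Collecting the terms containing $E_2$ and invoking the weighted Euler relation $4QP_Q+6RP_R=kP$ (writing $P_Q,P_R$ for $\partial P/\partial Q,\partial P/\partial R$, and $k=\omega(f)$) gives
\[
\Theta f\equiv \frac{k}{12}\,E_2\,P-\Bigl(\tfrac{R}{3}P_Q+\tfrac{Q^{2}}{2}P_R\Bigr)\mod\ell .
\]
I would then record the standard congruence $E_2\equiv E_{\ell+1}\mod\ell$, which I would check directly from the $q$-expansions via the Kummer--von Staudt--Clausen congruences; by \eqref{iso} this means $E_2$ is represented by the degree-$(\ell+1)$ polynomial $\tilde B$, with $\tilde A$ prime to $\tilde B$. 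Substituting $\tilde B$ for $E_2$ and multiplying the second (degree $k+2$) summand by $\tilde A\equiv 1$ to homogenize, one obtains a genuine degree-$(k+\ell+1)$ representative
\[
G=\frac{k}{12}\,\tilde B\,P-\tilde A\Bigl(\tfrac{R}{3}P_Q+\tfrac{Q^{2}}{2}P_R\Bigr),
\]
so $\omega(\Theta f)\leq\omega(f)+\ell+1$. Reducing modulo $\tilde A$ leaves $G\equiv\frac{k}{12}\tilde B P$: if $\ell\mid k$ this vanishes, so $\tilde A\mid G$ and the degree drops; if $\ell\nmid k$ then, because $\tilde A$ is squarefree and prime to $\tilde B$ while $\tilde A\nmid P$, each prime factor of $\tilde A$ misses $\tilde B$ and at least one misses $P$, whence $\tilde A\nmid\tilde B P$ and $\omega(\Theta f)=\omega(f)+\ell+1$. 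This is exactly (2).

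Finally, (3) follows from (1) and (2): when $\omega(f)\equiv 0\mod\ell$, statement (1) applied to $\Theta f$ (congruent to a weight-$(k+\ell+1)$ form) forces $\omega(\Theta f)\equiv\omega(f)+\ell+1\mod{\ell-1}$, while (2) gives the strict inequality $\omega(\Theta f)<\omega(f)+\ell+1$; together these make $(\omega(f)+\ell+1)-\omega(\Theta f)$ a positive multiple $s(\ell-1)$ with $s\geq1$. The main obstacle I anticipate is the bookkeeping in (2): correctly identifying the reduction of $E_2$ with $\tilde B$ and then using the squarefreeness of $\tilde A$ together with its coprimality to $\tilde B$ to pin down exactly when the leading term $\frac{k}{12}\tilde B P$ survives modulo $\tilde A$. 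Everything else reduces to the grading and unique factorization of $\F_\ell[Q,R]$.
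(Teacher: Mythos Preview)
Your proof is correct and is essentially the classical Swinnerton-Dyer argument. Note that the paper does not supply its own proof of this lemma: it simply states that ``The next lemma also follows from~\cite{swi1973a} Section 3,'' so there is no in-paper proof to compare against beyond that citation. Your write-up recovers exactly the line of reasoning in Swinnerton-Dyer's Section~3---using the $\F_\ell[Q,R]/(\tilde A-1)$ model, the characterization of $\omega(f)$ as the minimal degree of a homogeneous representative not divisible by $\tilde A$, Ramanujan's derivative identities combined with Euler's relation to produce the explicit representative $G=\tfrac{k}{12}\tilde B\,P-\tilde A\bigl(\tfrac{R}{3}P_Q+\tfrac{Q^2}{2}P_R\bigr)$ of $\Theta f$, and the squarefreeness of $\tilde A$ together with $\gcd(\tilde A,\tilde B)=1$ to decide when $\tilde A\mid G$. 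One cosmetic point: your sentence ``each prime factor of $\tilde A$ misses $\tilde B$ and at least one misses $P$'' is phrased awkwardly; what you actually use (and what is true) is that if $\tilde A\mid \tilde B P$ then every irreducible factor of $\tilde A$ divides $P$ (since none divides $\tilde B$), forcing $\tilde A\mid P$ by squarefreeness, a contradiction. Also, parts (2) and (3) implicitly assume $\Theta f\not\equiv 0$, which is the standard convention here.
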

The natural grading induced by (\ref{iso}) provides a key step in the following lemma which is taken from the proof of \cite{kimols1992a} Proposition 2.
\begin{lem}\label{unex cong lem}
A form $f\in M_k$ with $\Theta f \not\equiv 0\mod\ell$ has a simple congruence at $c\not\equiv 0\mod \ell$ if and only if  $\Theta^\frac{\ell +1}{2}f\equiv -\leg{c}{\ell}\Theta f \mod \ell$.
\end{lem}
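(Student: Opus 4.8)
The plan is to reinterpret a simple congruence at $c\not\equiv 0\mod\ell$ as a statement about the action of $\Theta$ on the "graded pieces" of $f\mod\ell$, using the natural $\Z/(\ell-1)\Z$-grading coming from the isomorphism (\ref{iso}). First I would recall that $f$ has a simple congruence at $c\mod\ell$ precisely when the coefficient $a(n)$ vanishes for $n\equiv c$, which is detected by the operator $\Theta$: since $\Theta^{\ell-1}$ acts on the $q^n$-coefficient as multiplication by $n^{\ell-1}$, which is $1$ if $\ell\nmid n$ and $0$ if $\ell\mid n$, the operator $\Theta^{\frac{\ell-1}{2}}$ acts on $a(n)q^n$ as multiplication by $n^{\frac{\ell-1}{2}}\equiv\leg{n}{\ell}\mod\ell$ (Euler's criterion), with the convention $\leg{0}{\ell}=0$. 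Thus $\Theta^{\frac{\ell-1}{2}}f$ isolates, up to sign, the part of $f$ supported on quadratic residues minus the part supported on non-residues.

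Next I would observe that $\Theta^{\frac{\ell-1}{2}}f$ has support only on $n$ with $\ell\nmid n$, hence $\Theta\left(\Theta^{\frac{\ell-1}{2}}f\right)=\Theta^{\frac{\ell+1}{2}}f$ has the same support, and on each such coefficient the two operators $\Theta^{\frac{\ell+1}{2}}$ and $\Theta$ differ by the factor $n^{\frac{\ell-1}{2}}\equiv\leg{n}{\ell}$. Comparing coefficient-by-coefficient: the coefficient of $q^n$ in $\Theta^{\frac{\ell+1}{2}}f+\leg{c}{\ell}\Theta f$ equals $a(n)\,n\left(\leg{n}{\ell}+\leg{c}{\ell}\right)$, which vanishes automatically when $n$ is a non-residue times $c$ — i.e. when $\leg{n}{\ell}=-\leg{c}{\ell}$ — and also when $\ell\mid n$. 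So the identity $\Theta^{\frac{\ell+1}{2}}f\equiv-\leg{c}{\ell}\Theta f\mod\ell$ holds if and only if $a(n)=0$ for all remaining $n$, namely those with $\leg{n}{\ell}=\leg{c}{\ell}$, equivalently $n\equiv c\,k^2\mod\ell$ for some $k$. This is not yet quite a simple congruence at $c$ alone — it forces vanishing on the whole coset of squares through $c$.

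The final step, which is the real content and where I expect the argument to lean on the grading, is to show that vanishing on the coset $c\cdot(\F_\ell^\times)^2$ is equivalent to vanishing at the single class $c$, given that $f$ is a genuine modular form with $\Theta f\not\equiv 0$. Here I would use the isomorphism (\ref{iso}): writing $f\equiv P(Q,R)\mod\ell$ and decomposing $\Theta^{\frac{\ell-1}{2}}f$ according to the grading, one sees that $\Theta^{\frac{\ell-1}{2}}f$ itself is congruent to a modular form, and the operator $\Theta^{\frac{\ell-1}{2}}$ permutes the support among residue classes in a way compatible with multiplication by squares — applying $\Theta$ an even number of times more cycles the support through $n\mapsto n\cdot(\text{unit})$, but the key point from Kiming--Olsson is that on the level of the graded algebra, if the residue-class-$c$ part were nonzero while another square-translate vanished, the filtration identities in Lemma~\ref{filtration of theta f} would be violated. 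So the support of $f\mod\ell$ is a union of full cosets of $(\F_\ell^\times)^2$ together possibly with $\{0\}$, and vanishing on one element of the coset through $c$ is the same as vanishing on all of it. The main obstacle is making this last grading argument precise: one must carefully track how $\Theta$ acts on the $\F_\ell[Q,R]/(\tilde A-1)$ presentation and rule out "partial" vanishing within a single coset — this is exactly the place where the hypothesis that $f$ is modular (not merely a Laurent series) is essential, and it is why the lemma is quoted from the proof of \cite{kimols1992a} Proposition~2 rather than being elementary.
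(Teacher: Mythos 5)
Your ``if'' direction is complete and correct: the coefficient of $q^n$ in $\Theta^{\frac{\ell+1}{2}}f + \leg{c}{\ell}\Theta f$ is $a(n)\,n\left(\leg{n}{\ell}+\leg{c}{\ell}\right)$, so the identity forces $a(n)\equiv 0\mod\ell$ for every $n$ with $\leg{n}{\ell}=\leg{c}{\ell}$, and in particular for $n\equiv c\mod\ell$. You have also correctly identified that the entire content of the converse is the implication ``vanishing on the single class $c$ implies vanishing on the full coset $c\cdot(\F_\ell^\times)^2$,'' and that this must use modularity. But that implication is exactly what you do not prove: your final paragraph asserts that partial vanishing within a coset ``would violate the filtration identities'' without saying which identity is violated or how, and you concede the point yourself. (The intermediate remark that applying $\Theta$ ``cycles the support through $n\mapsto n\cdot(\text{unit})$'' is also off: $\Theta$ multiplies $a(n)$ by $n$ and never moves support between residue classes.) As written, the proposal proves one direction and restates the other.

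The missing step is a short direct-sum argument in the graded algebra $\F_\ell[Q,R]/(\tilde{A}-1)$. The projection of $f$ onto the class $c$ satisfies $\sum_{n\equiv c\,(\ell)}a(n)q^n \equiv -\sum_{i=1}^{\ell-1}c^{-i}\Theta^i f \mod\ell$ (a finite geometric sum over $\F_\ell^\times$; the paper obtains the same identity by expanding $\Theta^{\ell-1}(q^{-c}f)$ with the product rule). By Lemma~\ref{filtration of theta f}(2),(3), each $\Theta^i f$ is congruent to a modular form of weight $\equiv \omega(f)+2i\mod{\ell-1}$, i.e.\ it is homogeneous in the $\Z/(\ell-1)\Z$-grading, and the degrees $\omega(f)+2i$ for $1\leq i\leq \ell-1$ coincide only for the pairs $\{i,\,i+\tfrac{\ell-1}{2}\}$. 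Since the grading is a direct-sum decomposition, vanishing of the whole sum forces $c^{-i}\Theta^i f + c^{-i-\frac{\ell-1}{2}}\Theta^{i+\frac{\ell-1}{2}}f\equiv 0\mod\ell$ for each $i$, and $i=1$ gives $\Theta^{\frac{\ell+1}{2}}f\equiv -c^{\frac{\ell-1}{2}}\Theta f\equiv -\leg{c}{\ell}\Theta f\mod\ell$. This yields exactly the coset statement you wanted, but it comes from the pairing of homogeneous components, not from any contradiction with the filtration formulas; supplying this paragraph would close the gap.
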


\begin{proof}
Since $\Theta$ satisfies the product rule,
\begin{align*}
\Theta^{\ell-1}\left(q^{-c}f\right) &\equiv \sum_{i=0}^{\ell -1} \binom{\ell-1}{i}(-c)^{\ell-1-i}q^{-c}\Theta^i f \mod \ell\\
&\equiv \sum_{i=0}^{\ell -1} c^{\ell-1-i}q^{-c}\Theta^i f \mod \ell\\
&\equiv c^{\ell-1}q^{-c}f + \sum_{i=1}^{\ell -1} c^{\ell-1-i}q^{-c}\Theta^i f \mod \ell.
\end{align*}
A simple congruence for $f$ at $c\not\equiv 0\mod \ell$ is equivalent to a simple congruence for $q^{-c}f$ at $0\mod\ell$, which in turn is equivalent to $\Theta^{\ell-1}\left(q^{-c}f\right) \equiv q^{-c}f \mod\ell$.  By the computation above, this is equivalent to $0\equiv \sum_{i=1}^{\ell -1} c^{\ell-1-i}q^{-c}\Theta^i f \mod \ell$, and hence to $0\equiv \sum_{i=1}^{\ell -1} c^{\ell-1-i}\Theta^i f \mod \ell$.  By Lemma \ref{filtration of theta f} (2) and (3), for $1\leq i \leq \frac{\ell-1}{2}$ we have
\begin{equation*}
\omega(\Theta^i f) \equiv \omega(\Theta^{i +\frac{\ell-1}{2}} f) \equiv \omega(f) +2i \mod{\ell-1}.
\end{equation*}
By Lemma \ref{filtration of theta f} (1) and the natural grading (filtration modulo $\ell-1$), the only way for the given sum to be zero is if for all $1\leq i\leq \frac{\ell-1}{2}$ we have
\begin{equation*}
c^{\ell -1-i}\Theta^i f +c^{\ell -1-(i+\frac{\ell-1}{2})}\Theta^{i+ \frac{\ell-1}{2}}f \equiv 0 \mod\ell,
\end{equation*}
which happens if and only if
\begin{align*}
\Theta^{i + \frac{\ell-1}{2}}f &\equiv -c^\frac{\ell-1}{2}\Theta^i f \equiv - \leg{c}{\ell}\Theta^i f \mod \ell,
\end{align*}
which happens if and only if
\begin{align*}
\Theta^{\frac{\ell+1}{2}}f \equiv - \leg{c}{\ell} \Theta f \mod\ell.
\end{align*}
\end{proof}

\begin{lem}\label{eis filt}
Let $a,b,c\geq 0$ be integers and let $\ell>11$ be prime.  Then $\omega(E_{\ell+1}^{a}E_{4}^{b}E_{6}^{c}) = a\ell + a +4b+6c$.
\end{lem}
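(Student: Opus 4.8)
\emph{Proof proposal.} The plan is to pass to the graded $\F_\ell$-algebra of reduced modular forms and convert the computation of $\omega$ into a divisibility question in $\F_\ell[Q,R]$, exploiting that $\tilde A$ is squarefree and prime to $\tilde B$. Since $E_4$ and $E_6$ are algebraically independent, the polynomials $A,B$ of Section~\ref{section: Preliminaries} are isobaric (assigning $Q$ weight $4$ and $R$ weight $6$) of weights $\ell-1$ and $\ell+1$; hence so are their reductions $\tilde A,\tilde B\in\F_\ell[Q,R]$, which are nonzero because $E_{\ell-1}\equiv 1\mod\ell$ (a consequence of~(\ref{iso})) and $E_{\ell+1}$ has constant term $1$. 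Consequently $E_{\ell+1}^aE_4^bE_6^c$ is congruent mod $\ell$ to $P(E_4,E_6)$, where $P:=\tilde B^aQ^bR^c$ is isobaric of weight $w:=a(\ell+1)+4b+6c=a\ell+a+4b+6c$; since $E_{\ell+1}^aE_4^bE_6^c$ has constant term $1$ it is $\not\equiv 0\mod\ell$, and in particular $\omega(E_{\ell+1}^aE_4^bE_6^c)\leq w$.

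The key step is the standard consequence of the grading~(\ref{iso}) (cf.\ \cite{swi1973a} Section~3): if $f\not\equiv 0\mod\ell$ is congruent to $P(E_4,E_6)$ for an isobaric $P$ of weight $w$, then $\omega(f)=w$ unless $\tilde A\mid P$ in $\F_\ell[Q,R]$. For if $\omega(f)=w'<w$, then $w'\equiv w\mod{\ell-1}$ by Lemma~\ref{filtration of theta f}(1) and $f\equiv P'(E_4,E_6)$ with $P'$ isobaric of weight $w'$, so $P-\tilde A^{(w-w')/(\ell-1)}P'$ is an isobaric element of weight $w$ lying in the kernel ideal $(\tilde A-1)$; writing it as $(\tilde A-1)H$ and comparing isobaric components shows it equals $\tilde A$ times the top isobaric component of $H$, forcing $\tilde A\mid P$. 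Thus it remains only to verify $\tilde A\nmid \tilde B^aQ^bR^c$.

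Suppose $\tilde A\mid \tilde B^aQ^bR^c$. Since $\tilde A$ has no repeated factor, each of its irreducible factors divides $\tilde B$, $Q$, or $R$; because $\tilde A$ is prime to $\tilde B$, every irreducible factor of $\tilde A$ is then a scalar multiple of $Q$ or of $R$, and squarefreeness forces $\tilde A=\gamma Q^iR^j$ with $\gamma\in\F_\ell^\times$ and $i,j\in\{0,1\}$. But $\tilde A$ is isobaric of weight $\ell-1$, so $\ell-1=4i+6j\leq 10$, contradicting $\ell>11$. Hence $\tilde A\nmid \tilde B^aQ^bR^c$, and the previous paragraph gives $\omega(E_{\ell+1}^aE_4^bE_6^c)=w=a\ell+a+4b+6c$.

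The one delicate ingredient is the filtration/divisibility dictionary of the second paragraph: $\omega(f)\leq w$ is immediate, but that a strict drop forces $\tilde A\mid P$ relies on the uniqueness of the expression of a modular form as a polynomial in $E_4,E_6$ together with careful bookkeeping of isobaric components in $(\tilde A-1)$. Everything afterward is elementary, and the inequality $4i+6j\leq 10<\ell-1$ is precisely where $\ell>11$ (rather than merely $\ell\geq 11$, where $\tilde A=\gamma QR$ occurs and the lemma fails, e.g.\ $\omega(E_4E_6)=\omega(E_{10})=0$) is used.
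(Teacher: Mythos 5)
Your proof is correct and follows essentially the same route as the paper: reduce the claim to showing $\tilde A\nmid \tilde B^aQ^bR^c$, use squarefreeness of $\tilde A$ and coprimality to $\tilde B$ to reduce further to $\tilde A\nmid QR$, and rule that out by comparing the weight $\ell-1>10$ of $\tilde A$ with the weight $10$ of $QR$. The paper simply cites the filtration/divisibility dictionary from Swinnerton-Dyer where you prove it from the grading; that extra detail (and your remark on the failure at $\ell=11$) is fine but not a different argument.
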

\begin{proof}
Since $E_{\ell+1}^{a}E_{4}^{b}E_{6}^{c}\in M_{a\ell + a +4b+6c}$, it suffices to show that $\tilde{A}(Q,R)$ does not divide $\tilde{B}(Q,R)^{a}Q^{b}R^{c}$.  However $\tilde{A}$ has no repeated factors and is prime to $\tilde{B}$ and so it suffices to show that $\tilde{A}$ does not divide $QR$.  But $QR$ has weight 10 and $E_{\ell-1}$ has weight $\ell-1 > 10$ so this is impossible.
\end{proof}

\section{The Structure of Tate Cycles}\label{section: The Structure of Tate Cycles}
The following framework follows Jochnowitz \cite{joc1982a}.  Let $f\in M_k$ be such that $\Theta f\not\equiv 0\mod\ell$.  Recall from Section~\ref{section: Preliminaries} that the Tate cycle of $f$ is the sequence $\Theta f, \dots, \Theta^{\ell-1} f \mod\ell$.  By Lemma~\ref{filtration of theta f} (2) and (3),
\begin{align*}
\omega (\Theta^{i+1} f) \equiv \begin{cases} \omega(\Theta^{i}f) + 1 \mod \ell & \text{if } \omega(\Theta^{i} f) \not\equiv 0\mod\ell\\
s + 1 \mod \ell & \text{if } \omega(\Theta^{i} f)\equiv 0\mod\ell,
\end{cases}
\end{align*}
for some $s\geq 1$.  In particular, when $\omega(\Theta^{i}f) \equiv 0\mod \ell$, the amount $s$ by which the filtration decreases controls when the {\it next} decrease occurs.  We say that $\Theta^{i}f$ is a high point of the Tate cycle and $\Theta^{i+1}f$ is a low point of the Tate cycle whenever $\omega(\Theta^{i}f)\equiv 0\mod\ell$.   Elementary considerations (see, for example, \cite{joc1982a} Section 7 or \cite{dew2009a} Section 3) yield
\begin{lem}\label{tc basics lem}
Let $f\in M_{k}$ with $\Theta f \not\equiv 0\mod\ell$.
\begin{enumerate}
\item If the Tate cycle has only one low point, then the low point has filtration $2\mod\ell$. 
\item The Tate cycle has one or two low points.
\end{enumerate}
\end{lem}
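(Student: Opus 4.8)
The plan is to track the filtration $\omega$ around the Tate cycle using only Lemma~\ref{filtration of theta f}(2)--(3). Write $w_{i}:=\omega(\Theta^{i}f)$ for $i\geq 1$. Since $\Theta^{\ell}g\equiv\Theta g\mod\ell$ for every $g\in\Z_{(\ell)}(\!(q)\!)$, applying this with $g=\Theta^{i-1}f$ gives $\Theta^{i+\ell-1}f\equiv\Theta^{i}f\mod\ell$, so $w_{i+\ell-1}=w_{i}$ and the sequence $(w_{i})$ is periodic of period $\ell-1$. By Lemma~\ref{filtration of theta f}(2), at a step where $\Theta^{i}f$ is not a high point, $w_{i+1}=w_{i}+\ell+1$, hence $w_{i+1}\equiv w_{i}+1\mod\ell$; by Lemma~\ref{filtration of theta f}(3), at a high point $\Theta^{i}f$, $w_{i+1}=w_{i}+\ell+1-s_{i}(\ell-1)$ for some integer $s_{i}\geq 1$, hence $w_{i+1}\equiv 1+s_{i}\mod\ell$.

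First I would note there is at least one high point, for otherwise every step would increase $w_{i}$ by $\ell+1>0$, contradicting $w_{\ell}=w_{1}$. Let $i_{1},\dots,i_{m}$ be the positions of the high points in one period, cyclically ordered, with drop parameters $s_{i_{1}},\dots,s_{i_{m}}$. Telescoping $\sum_{i}(w_{i+1}-w_{i})=0$ over one period of $\ell-1$ steps yields $(\ell-1)(\ell+1)-(\ell-1)\sum_{k}s_{i_{k}}=0$, i.e. $\sum_{k=1}^{m}s_{i_{k}}=\ell+1$. Next I would compute the gap $d_{k}:=i_{k+1}-i_{k}$ between consecutive high points: reading residues mod $\ell$ starting from position $i_{k}$, they are $0$, then $1+s_{i_{k}}$, then $2+s_{i_{k}}$, and so on, each non-high step adding $1$, so the residue at $i_{k}+j$ is $j+s_{i_{k}}\bmod\ell$ for $1\leq j\leq d_{k}$; the smallest $j\geq 1$ making this $\equiv 0$ is $j=\ell-\bar s_{k}$ when $\bar s_{k}:=s_{i_{k}}\bmod\ell$ is nonzero, and $j=\ell$ when $\bar s_{k}=0$. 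Since $\sum_{k}d_{k}=\ell-1$ with each $d_{k}\geq 1$, no $d_{k}$ can equal $\ell$, so $\bar s_{k}\neq 0$ and $d_{k}=\ell-\bar s_{k}$ for all $k$; then $\ell-1=\sum_{k}d_{k}=m\ell-\sum_{k}\bar s_{k}$, so $\sum_{k}\bar s_{k}=(m-1)\ell+1$.

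To finish, since each $\bar s_{k}\in\{1,\dots,\ell-1\}$ is the least positive integer congruent to $s_{i_{k}}$ and $s_{i_{k}}\geq 1$, we have $\bar s_{k}\leq s_{i_{k}}$; summing, $(m-1)\ell+1=\sum_{k}\bar s_{k}\leq\sum_{k}s_{i_{k}}=\ell+1$, whence $(m-1)\ell\leq\ell$ and $m\leq 2$. Together with $m\geq 1$ this gives part~(2). For part~(1), if $m=1$ then $\sum_{k}\bar s_{k}=1$ forces $\bar s_{1}=1$, so $s_{i_{1}}\equiv 1\mod\ell$, and the unique low point $\Theta^{i_{1}+1}f$ has filtration $w_{i_{1}+1}\equiv 1+s_{i_{1}}\equiv 2\mod\ell$. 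I expect the only delicate step to be the residue bookkeeping in the gap computation --- in particular ruling out $\bar s_{k}=0$ (a low point whose run would wrap the entire cycle) and pinning down $\bar s_{k}\leq s_{i_{k}}$, whose slack is exactly what forces $m\leq 2$; everything else is a direct application of Lemma~\ref{filtration of theta f}.
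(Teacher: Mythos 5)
Your argument is correct and is precisely the ``elementary considerations'' the paper delegates to Jochnowitz and \cite{dew2009a}: tracking $\omega(\Theta^i f)$ mod $\ell$ around the period-$(\ell-1)$ cycle, telescoping to get $\sum_k s_{i_k}=\ell+1$, and comparing with the gap identity $\sum_k \bar s_k=(m-1)\ell+1$ to force $m\leq 2$, with $\bar s_1=1$ giving filtration $2\bmod\ell$ when $m=1$. The bookkeeping (ruling out $\bar s_k=0$, the bound $\bar s_k\leq s_{i_k}$, and the implicit fact that no $\Theta^i f$ vanishes in the cycle) is all handled correctly, so this is a complete proof in the same spirit as the cited sources.
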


\begin{lem}\label{what is s}
Suppose $f\in M_{k}$ has a simple congruence at $c\not\equiv 0\mod\ell$, where $\ell\geq 5$ is prime, and $\Theta f\not\equiv 0\mod\ell$.  Then the Tate cycle of $f$ has two low points.  Furthermore, if $\Theta^{i}f$ is a high point, then
\begin{align*}
\omega(\Theta^{i+1} f) = \omega(\Theta^{i}f) + (\ell + 1) - \left( \frac{\ell+1}{2} \right)(\ell-1) \equiv \frac{\ell+3}{2} \mod\ell.
\end{align*}
\end{lem}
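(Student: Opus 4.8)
The plan is to extract from the hypothesis the single extra relation supplied by Lemma~\ref{unex cong lem} and then read everything off from the combinatorics of the Tate cycle. First I would note that a simple congruence at $c\not\equiv 0\mod\ell$ gives $\Theta^{\frac{\ell+1}{2}}f\equiv -\leg{c}{\ell}\Theta f\mod\ell$; applying $\Theta^{i-1}$ to both sides propagates this to
\begin{equation*}
\Theta^{\,i+\frac{\ell-1}{2}}f\equiv -\leg{c}{\ell}\Theta^{i}f\mod\ell\qquad(i\geq 1).
\end{equation*}
Since multiplication by the unit $-\leg{c}{\ell}$ does not change the filtration, this says $\omega(\Theta^{\,i+\frac{\ell-1}{2}}f)=\omega(\Theta^{i}f)$ for all $i$; that is, the filtration sequence along the Tate cycle is invariant under the half-shift $i\mapsto i+\frac{\ell-1}{2}$.

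For the first assertion, I would observe that this half-shift is a fixed-point-free involution on the $(\ell-1)$-element Tate cycle (here $\ell\geq 5$ is used, since $0<\tfrac{\ell-1}{2}<\ell-1$), and that it carries high points to high points, these being exactly the indices with $\omega(\Theta^{i}f)\equiv 0\mod\ell$. Hence the high points fall into $2$-element orbits, so there is an even number of them; the low points (the successors of the high points) occur in the same number. By Lemma~\ref{tc basics lem}(2) this number is $1$ or $2$, so it must be $2$.

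For the formula, let $\Theta^{i}f$ be one of the two high points, so that $\Theta^{\,i+\frac{\ell-1}{2}}f$ is the other. By Lemma~\ref{filtration of theta f}(3) there are integers $s_{1},s_{2}\geq 1$ with $\omega(\Theta^{i+1}f)=\omega(\Theta^{i}f)+(\ell+1)-s_{1}(\ell-1)$ and $\omega(\Theta^{\,i+\frac{\ell-1}{2}+1}f)=\omega(\Theta^{\,i+\frac{\ell-1}{2}}f)+(\ell+1)-s_{2}(\ell-1)$. The half-shift invariance of the filtration sequence makes the two left-hand sides equal and the two leading terms on the right equal, forcing $s_{1}=s_{2}$. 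Separately, telescoping $\sum_{j}(\omega(\Theta^{j+1}f)-\omega(\Theta^{j}f))=0$ around the cycle and using Lemma~\ref{filtration of theta f}(2), under which each of the $\ell-3$ non-high steps contributes exactly $\ell+1$, gives $(\ell-3)(\ell+1)+2(\ell+1)-(s_{1}+s_{2})(\ell-1)=0$, i.e.\ $s_{1}+s_{2}=\ell+1$. Therefore $s_{1}=s_{2}=\frac{\ell+1}{2}$, which is the displayed identity, and since $\omega(\Theta^{i}f)\equiv 0$ and $(\ell+1)-\frac{\ell+1}{2}(\ell-1)\equiv\frac{\ell+3}{2}\mod\ell$, the stated congruence follows.

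The substantive step is the half-shift symmetry coming out of Lemma~\ref{unex cong lem}; once that is available the rest is bookkeeping with the two filtration lemmas and a telescoping identity. The only point that needs care is that the half-shift is genuinely fixed-point-free and genuinely interchanges the (at most two) high points — this is where $\ell\geq 5$ enters, and it is exactly what upgrades "one or two low points" to "two".
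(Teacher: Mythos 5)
Your proof is correct and rests on the same key input as the paper's: Lemma~\ref{unex cong lem} forces the filtration sequence of the Tate cycle to coincide at half-period, which yields both the count of two low points and, via telescoping against Lemma~\ref{filtration of theta f}, the value $s=\frac{\ell+1}{2}$. The paper telescopes over just one half of the cycle (where there is exactly one low point) instead of over the whole cycle with your $s_1=s_2$ symmetry argument, but this is only a difference in bookkeeping.
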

\begin{proof}
By Lemma~\ref{unex cong lem}, $\omega\left(\Theta f\right) =\omega(\Theta^{\frac{\ell+1}{2}}f)$.  Hence, the filtration is not monotonically increasing between $\Theta f$ and $\Theta^{\frac{\ell+1}{2}}f$, so there must be a fall in filtration somewhere in the first half of the Tate cycle.  We also have $\omega(\Theta^{\frac{\ell+1}{2}}f)=\omega\left(\Theta f\right) =\omega\left(\Theta^{\ell} f\right)$ and so there must be a low point somewhere in the second half of the Tate cycle.  By Lemma~\ref{tc basics lem}, there are exactly two low points in the Tate cycle.  Lemma~\ref{filtration of theta f} (2) and (3) give
\begin{align*}
 \omega\left(\Theta f\right)=\omega\left( \Theta^{\frac{\ell+1}{2}}f \right) = \omega\left(\Theta f\right) + \left(\frac{\ell-1}{2}\right)(\ell+1) -s(\ell-1)
\end{align*}
for some $s\geq 1$.  Hence $s=\frac{\ell+1}{2}$.  The lemma follows.
\end{proof}
The proof of Theorem~\ref{main thm} uses the previous lemma to determine how far the filtration falls, and the bounds of the next lemma to show a corresponding restriction on $\ell$.
\begin{lem}\label{key bounds lemma}
Let $\ell\geq 5$ be prime and suppose $f\in M_{k}$ has a simple congruence at $c\not\equiv 0\mod\ell$.  If $\omega(f) = A\ell +B$ where $1\leq B \leq \ell-1$, then
\begin{align*}
\frac{\ell+1}{2} \leq B \leq A +\frac{\ell+3}{2}.
\end{align*}
\end{lem}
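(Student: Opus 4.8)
The plan is to run once through the Tate cycle of $f$, tracking the filtrations $w_i:=\omega(\Theta^i f)$ from the anchor $w_0:=\omega(f)=A\ell+B$, and to pin down where the first high point occurs. The first observation is that $w_0\equiv B\not\equiv0\mod\ell$ because $1\le B\le\ell-1$; hence by Lemma~\ref{filtration of theta f}(2) we get $\Theta f\not\equiv0\mod\ell$ and $w_1=w_0+\ell+1$, so the structure theory of Section~\ref{section: The Structure of Tate Cycles} together with Lemmas~\ref{unex cong lem} and~\ref{what is s} is available.

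Next I would let $j\ge1$ be the smallest index for which $\Theta^j f$ is a high point, and check that such a $j$ exists with $j\le\frac{\ell-1}{2}$: a simple congruence forces $\omega(\Theta^{(\ell+1)/2}f)=\omega(\Theta f)$ by Lemma~\ref{unex cong lem}, so the filtration cannot increase monotonically from $\Theta f$ to $\Theta^{(\ell+1)/2}f$, and therefore a high point must occur among $\Theta^1 f,\dots,\Theta^{(\ell-1)/2}f$ (this is the reasoning in the proof of Lemma~\ref{what is s}). The step from $f$ to $\Theta f$ raises the filtration by $\ell+1$ because $w_0\not\equiv0\mod\ell$, and none of $\Theta^1 f,\dots,\Theta^{j-1}f$ is a high point by minimality of $j$, so every step from position $0$ up to position $j$ adds exactly $\ell+1$:
\begin{equation*}
w_j=w_0+j(\ell+1)=\ell(A+j)+(B+j).
\end{equation*}
Since $\Theta^j f$ is a high point, $\ell\mid w_j$, hence $\ell\mid(B+j)$; and $2\le B+j\le(\ell-1)+\frac{\ell-1}{2}<2\ell$, so necessarily $B+j=\ell$, that is $j=\ell-B$. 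The inequality $j\le\frac{\ell-1}{2}$ then reads $\ell-B\le\frac{\ell-1}{2}$, which is exactly the lower bound $B\ge\frac{\ell+1}{2}$.

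For the upper bound I would substitute $j=\ell-B$, so that $w_j=\ell(A+\ell+1-B)$, and apply Lemma~\ref{what is s} at the high point $\Theta^j f$:
\begin{equation*}
w_{j+1}=w_j+(\ell+1)-\frac{\ell+1}{2}(\ell-1)=\ell(A+\ell+1-B)-\frac{(\ell+1)(\ell-3)}{2}.
\end{equation*}
As $\Theta^{j+1}f$ lies in the Tate cycle it is not $\equiv0\mod\ell$, so $w_{j+1}\ge0$, giving $\ell(A+\ell+1-B)\ge\frac{(\ell+1)(\ell-3)}{2}$. Now $A+\ell+1-B$ is an integer, while $\frac{(\ell+1)(\ell-3)}{2\ell}$ exceeds $\frac{\ell-3}{2}$ by $\frac{\ell-3}{2\ell}\in(0,1)$ for $\ell\ge5$, hence lies strictly between the consecutive integers $\frac{\ell-3}{2}$ and $\frac{\ell-1}{2}$; therefore $A+\ell+1-B\ge\frac{\ell-1}{2}$, which rearranges to $B\le A+\frac{\ell+3}{2}$.

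The one step I expect to require care is the claim that the first high point sits at precisely position $\ell-B$. This rests on verifying that the filtration genuinely jumps by $\ell+1$ at every step before then (so that no earlier high point is overlooked), together with the numerical point that $B+j$ lies in the open interval $(0,2\ell)$, which pins the only available multiple of $\ell$ down to $\ell$ itself. Once position $\ell-B$ is secured, both inequalities drop out of the two displayed identities and the elementary estimate on $\frac{(\ell+1)(\ell-3)}{2\ell}$.
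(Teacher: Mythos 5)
Your proof is correct and follows essentially the same route as the paper: locate the first high point at position $\ell-B$ using the forced fall before $\Theta^{(\ell+1)/2}f$ (which gives the lower bound on $B$), then apply Lemma~\ref{what is s} at that high point and use nonnegativity of the resulting low point's filtration to get the upper bound. The only cosmetic difference is that you extract the second inequality via an integrality estimate on $\frac{(\ell+1)(\ell-3)}{2\ell}$, whereas the paper rewrites the low point's filtration as $\bigl(A-B+\frac{\ell+3}{2}\bigr)\ell+\frac{\ell+3}{2}$; these are equivalent.
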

\begin{proof}
Since $B\neq 0$, $\omega(\Theta f) = (A+1)\ell + (B+1)$.  From the proof of Lemma~\ref{what is s}, the Tate cycle has a high point before $\Theta^{\frac{\ell+1}{2}}f$.  Hence by Lemma~\ref{filtration of theta f} (2),
\begin{align*}
B+1 + \frac{\ell-3}{2} \geq \ell,
\end{align*}
which gives the first inequality.  Also by Lemma~\ref{filtration of theta f}, the high point has filtration
\begin{align*}
\omega(\Theta^{\ell-B} f) &= \omega(f) + (\ell-B)(\ell+1)\\
&= (A+\ell-B+1)\ell.
\end{align*}
Lemma~\ref{what is s} implies that the corresponding low point has filtration
\begin{align*}
\omega(\Theta^{\ell-B+1} f) &= \left( A-B+\frac{\ell+3}{2} \right)\ell + \left(\frac{\ell+3}{2}\right).
\end{align*}
The fact that $\omega(\Theta^{\ell-B+1} f) \geq 0$ implies the second inequality.
\end{proof}

If $\Theta f \equiv 0 \mod \ell$ then the Tate cycle is trivial and above lemmas are not applicable.  We dispense with this case now.

\begin{lem}\label{theta is zero}
Let $f=E_{2}^{r}E_{4}^{s}E_{6}^{t}$ where $r\geq 0$ and $s,t\in \Z$.  If $\ell$ is a prime such that $\Theta f \equiv 0 \mod\ell$ then either $\ell \leq13$ or $r\equiv s\equiv t\equiv 0\mod \ell$.
\end{lem}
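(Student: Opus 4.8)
The plan is to turn $\Theta f\equiv 0\mod\ell$ into a polynomial identity in $\F_{\ell}[Q,R]$ and then extract congruences on $r,s,t$ by divisibility. Since $E_2,E_4,E_6$ each reduce mod $\ell$ to a power series with constant term $1$, the product $f$ is a unit in $\F_{\ell}[[q]]$, so $\Theta f\equiv 0\mod\ell$ is equivalent to the vanishing of the logarithmic derivative $\Theta f/f$. Using the Ramanujan identities $12\,\Theta E_2=E_2^{2}-E_4$, $3\,\Theta E_4=E_2E_4-E_6$, $2\,\Theta E_6=E_2E_6-E_4^{2}$ and clearing denominators (multiplying by $12E_2E_4E_6$, a unit), I would rewrite this as
\begin{align*}
rE_4E_6(E_2^{2}-E_4)+4sE_2E_6(E_2E_4-E_6)+6tE_2E_4(E_2E_6-E_4^{2})&\equiv 0\mod\ell .
\end{align*}

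Next I would make the left side a genuine modular form. Recall that for $\ell\geq 5$ one has $E_2\equiv E_{\ell+1}\mod\ell$, while $E_{\ell-1}\equiv 1\mod\ell$. Substituting $E_{\ell+1}$ for $E_2$ produces three terms of weights $2\ell+12$, $14$, $\ell+13$; multiplying the latter two by $E_{\ell-1}^{2}$ and $E_{\ell-1}$ respectively lands everything in $M_{2\ell+12}$ and gives
\begin{align*}
(r+4s+6t)E_{\ell+1}^{2}E_4E_6-rE_4^{2}E_6E_{\ell-1}^{2}-4sE_{\ell+1}E_6^{2}E_{\ell-1}-6tE_{\ell+1}E_4^{3}E_{\ell-1}&\equiv 0\mod\ell .
\end{align*}
Pass through the isomorphism (\ref{iso}) with $E_4\mapsto Q$, $E_6\mapsto R$ (so $E_{\ell-1}\mapsto\tilde A$ and $E_{\ell+1}\mapsto\tilde B$): this congruence says that the homogeneous polynomial $(r+4s+6t)\tilde B^{2}QR-rQ^{2}R\tilde A^{2}-4s\tilde B R^{2}\tilde A-6t\tilde B Q^{3}\tilde A$ of weighted degree $2\ell+12$ (with $\deg Q=4$, $\deg R=6$) lies in the ideal $(\tilde A-1)$. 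But $\tilde A-1$ is not homogeneous, so a homogeneous element of that ideal must vanish (compare top and bottom degrees of a product $(\tilde A-1)h$), and we obtain the honest identity
\begin{align*}
(r+4s+6t)\,\tilde B^{2}QR &= rQ^{2}R\,\tilde A^{2}+4s\,\tilde B R^{2}\tilde A+6t\,\tilde B Q^{3}\tilde A\qquad\text{in }\F_{\ell}[Q,R].
\end{align*}

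Finally I would peel off the congruences, now assuming $\ell>13$, using $\deg\tilde A=\ell-1$, $\deg\tilde B=\ell+1$, and the fact (from the excerpt) that $\tilde A$ is squarefree and prime to $\tilde B$. The right-hand side is divisible by $\tilde A$, so $\tilde A\mid(r+4s+6t)\tilde B^{2}QR$ and hence $\tilde A\mid(r+4s+6t)QR$; since $\deg\tilde A=\ell-1>10=\deg QR$ this forces $r+4s+6t\equiv 0\mod\ell$. Cancelling one factor of $\tilde A$ from the remaining identity gives $rQ^{2}R\,\tilde A+4s\,\tilde B R^{2}+6t\,\tilde B Q^{3}\equiv 0$, so $\tilde B\mid rQ^{2}R$; since $\deg\tilde B=\ell+1>14=\deg Q^{2}R$ (this is exactly where $\ell>13$ is used) this forces $r\equiv 0\mod\ell$, and then $4sR^{2}+6tQ^{3}\equiv 0$ in $\F_{\ell}[Q,R]$ forces $4s\equiv 6t\equiv 0$, hence $s\equiv t\equiv 0\mod\ell$ since $\ell>3$. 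The step demanding the most care is the passage to the polynomial identity — getting the homogenization right and using injectivity of reduction on a fixed weight; everything afterward is degree bookkeeping. That bookkeeping is also what makes $\ell=13$ the genuine boundary: there $\tilde B=E_{14}=E_4^{2}E_6=Q^{2}R$, so $\tilde B\mid rQ^{2}R$ is automatic and nothing is forced (and indeed $E_2/E_{14}\equiv 1\mod{13}$, so $\Theta(E_2/E_{14})\equiv 0\mod{13}$), while for $\ell\leq 11$ the very first step already fails because $\tilde A=E_{\ell-1}$ is then a monomial in $Q,R$.
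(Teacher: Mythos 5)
Your proof is correct, but it follows a genuinely different route from the paper's. The paper's argument is computational: it expands $f=E_2^rE_4^sE_6^t$ to order $q^2$, imposes the vanishing mod $\ell$ of the coefficients of $q$ and $q^2$, adjoins the filtration relation $r+4s+6t\equiv 0\mod\ell$ (obtained from Lemma~\ref{filtration of theta f}(2) and Lemma~\ref{eis filt}, since $\Theta f\equiv 0$ forces $\omega(f)\equiv 0\mod\ell$), and solves the resulting system to reach $-8255520\,t\equiv 0\mod\ell$ with $8255520=2^5\cdot 3^4\cdot 5\cdot 7^2\cdot 13$, whence $t\equiv 0$ and then $r\equiv s\equiv 0$ for $\ell\geq 17$. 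You instead convert $\Theta f/f\equiv 0$ via the Ramanujan differential system into a single homogeneous identity in $\F_\ell[Q,R]$ and extract the three congruences by divisibility against $\tilde A$ and $\tilde B$. Your intermediate steps check out: the homogenization into weight $2\ell+12$ is consistent, the observation that a homogeneous element of the ideal $(\tilde A-1)$ must vanish (compare the lowest and highest homogeneous components of $(\tilde A-1)h$) is exactly the injectivity of reduction in a fixed weight, and the degree comparisons $\deg\tilde A=\ell-1>10$ and $\deg\tilde B=\ell+1>14$ are where $\ell>11$ and $\ell>13$ enter. Each approach buys something: the paper's needs only two Fourier coefficients and linear algebra, but the constant $8255520$ appears unmotivated; yours recovers $r+4s+6t\equiv 0\mod\ell$ without the filtration lemma and makes the threshold $\ell=13$ structurally transparent via $E_{14}=E_4^2E_6$ (your remark that $\tilde B=Q^2R$ at $\ell=13$ is confirmed by the paper's example $E_2^{144}E_4^{-15}E_6^{-14}$, where $r\equiv 1\not\equiv 0\mod{13}$), at the cost of invoking the Ramanujan identities and the full strength of the Swinnerton--Dyer structure theorem — though the latter is already cited in Section~\ref{section: Preliminaries}.
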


\begin{ex}\label{ell is 11}
We have $\Theta(E_{4}E_{6}) \equiv 0 \mod\ell$ for $\ell=2,3,11$.  
\end{ex}
\begin{ex}
We have $\Theta(E_{2}^{144}E_{4}^{-15}E_{6}^{-14})\equiv 0 \mod\ell$ for $\ell=2,3,5,7,13$.
\end{ex}

Note that $\Theta f\equiv 0 \mod\ell$ is equivalent to $f$ having simple congruences at all $c\not\equiv 0\mod\ell$.

\begin{proof}[Proof of Lemma~\ref{theta is zero}]
Assume $\ell \geq 17$ and expand $f$ as a power series to get
\begin{align*}
f = 1 + \Large(-24r &+240s -504t\Large)q + \Large(288r^2 - 5760rs + 12096rt \\&- 360r + 28800s^2 - 120960st - 26640s + 127008t^2 - 143640t\Large)q^2 +\cdots.
\end{align*}
If $\Theta f\equiv 0 \mod\ell$, then the coefficients of $q$ and $q^{2}$ vanish modulo $\ell$.  That is,
\begin{equation}\label{coeff q}
-24r +240s -504t\equiv 0 \mod\ell,
\end{equation}
and
\begin{equation}\label{coeff qq}
\begin{split}288r^2 - 5760rs + 12096rt- 360r + 28800s^2 \\- 120960st - 26640s + 127008t^2 - 143640t \end{split}\equiv 0 \mod\ell.
\end{equation}
Furthermore, by Lemmas~\ref{filtration of theta f}(2) and~\ref{eis filt} and the fact that $E_{2}\equiv E_{\ell+1}\mod \ell$, we have
\begin{align}\label{r4s6t}
\omega(E_{\ell+1}^{r}E_{4}^{s}E_{6}^{t})\equiv r+4s+6t \equiv 0\mod\ell.
\end{align}
Solving the system of congruences given by (\ref{r4s6t}) and (\ref{coeff q}) yields
\begin{align}
7r &\equiv -72t \mod\ell\label{r is},\\
14s &\equiv 15t \mod\ell\label{s is}.
\end{align}
Substituting (\ref{r is}) and (\ref{s is}) into 49 times (\ref{coeff qq}) yields
\begin{align*}
-8255520t\equiv 0 \mod \ell.
\end{align*}
Since $8255520=2^{5}\cdot 3^{4}\cdot 5\cdot 7^{2}\cdot13 $, the lemma follows.
\end{proof}

\section{Proof of Theorem~\ref{main thm}}
We begin with the trivial observation that $E_{2}^{r}E_{4}^{s}E_{6}^{t} = 1 + \cdots$ does not have a simple congruence at $0\mod\ell$.  Hence, we assume that $E_{2}^{r}E_{4}^{s}E_{6}^{t}$ has a simple congruence at $c\not\equiv 0\mod\ell,$ where $\ell\geq 5$.  Since $E_{2}\equiv E_{\ell+1}\mod \ell$, $E_{\ell+1}^{r}E_{4}^{s}E_{6}^{t}$ has a simple congruence at $c\mod\ell$.  Recall that our goal is to show $\ell \leq 2r +8|s|+12|t| + 21$.  Hence, if $\ell < |s|$ or $\ell< |t|$ then we are done.  Thus we assume $\ell+s\geq 0$ and $\ell+t\geq 0$.  We also assume $\ell > 11$.  Lemma~\ref{theta is zero} allows us to take $\Theta(E_{2}^{r}E_{4}^{s} E_{6}^{t})\not\equiv 0\mod\ell$ (otherwise we are done).  By Lemma~\ref{replacement lemma} we see that
\begin{align*}
E_{\ell+1}^{r}E_{4}^{\ell+s}E_{6}^{\ell+t} \in M_{(r+10)\ell + (r+4s+6t)}
\end{align*}
has a simple congruence at $c\mod\ell$.  By Lemma~\ref{eis filt},
\begin{align}\label{eqn: filt A,B}
\omega(E_{\ell+1}^{r}E_{4}^{\ell+s}E_{6}^{\ell+t}) = (r+10)\ell + (r+4s+6t).
\end{align}

We break into four cases depending on the size of $r+4s+6t$:

\begin{enumerate}
\item If $\ell \leq |r+4s+6t|$ then we are done.  

\item If $0< r+4s+6t < \ell$ then by Equation~(\ref{eqn: filt A,B}) and the first inequality of Lemma~\ref{key bounds lemma}, $\frac{\ell+1}{2} \leq r+4s+6t$ and we are done.

\item If $r+4s+6t = 0$, then by Lemma~\ref{filtration of theta f}
\begin{align*}
\omega(\Theta E_{\ell+1}^{r}E_{4}^{\ell+s}E_{6}^{\ell+t}) = (r+11)\ell +1 -s'(\ell-1)
\end{align*}
for some $1\leq s'$.  If $\ell > r+ 13$, then in order for this filtration to be non-negative, $s' \leq r+11$.  Now $\omega(\Theta E_{\ell+1}^{r}E_{4}^{\ell+s}E_{6}^{\ell+t}) \equiv s'+1\mod \ell$.  By Lemma~\ref{unex cong lem}, there must be a high point of the Tate cycle before $\Theta^{\frac{\ell+1}{2}} E_{\ell+1}^{r}E_{4}^{\ell+s}E_{6}^{\ell+t}$.  Hence
\begin{align*}
s'+1 + \frac{\ell-3}{2} \geq \ell.
\end{align*}
That is, $\ell \leq 2s' -1 \leq 2r + 21$ and we are done.

\item If $-\ell < r+4s + 6t < 0$, then take $B=\ell + r +4s+6t$ and $A=r+9$.  Equation~(\ref{eqn: filt A,B}) and the second inequality of Lemma~\ref{key bounds lemma} gives
\begin{align*}
\ell + r +4s+6t \leq r+9 + \frac{\ell+3}{2}
\end{align*}
which is equivalent to $\ell \leq 21-8s-12t$ and we are done.
\end{enumerate}

\begin{rem}\label{bound improvement}
Combining these four cases and recalling the assumptions above, we see that if $r+4s+6t > 0$ then
\begin{align*}
\ell \leq \max\{ |s|-1, |t|-1, 11, 2r+8s+6t - 1\}
\end{align*}
and if $r+4s+6t\leq 0$ then
\begin{align*}
\ell \leq \max\{ |s|-1, |t|-1, 11, 21-8s-12t\}
\end{align*}
\end{rem}

\bibliographystyle{plain}
\bibliography{/Users/Michael/Documents/bibliog}

\begin{thebibliography}{1}

\bibitem{ahlboy2003a}
Scott Ahlgren and Matthew Boylan.
\newblock Arithmetic properties of the partition function.
\newblock {\em Invent. Math.}, 153(3):487--502, 2003.

\bibitem{beryee2002a}
Bruce~C. Berndt and Ae~Ja Yee.
\newblock Congruences for the coefficients of quotients of {E}isenstein series.
\newblock {\em Acta Arith.}, 104(3):297--308, 2002.

\bibitem{dew2009a}
Michael Dewar.
\newblock Non-existence of {R}amanujan congruences in modular forms of level
  four.
\newblock {\em Preprint}, 2009.

\bibitem{joc1982a}
Naomi Jochnowitz.
\newblock A study of the local components of the {H}ecke algebra mod {$l$}.
\newblock {\em Trans. Amer. Math. Soc.}, 270(1):253--267, 1982.

\bibitem{kimols1992a}
Ian Kiming and J{\o}rn~B. Olsson.
\newblock Congruences like {R}amanujan's for powers of the partition function.
\newblock {\em Arch. Math. (Basel)}, 59(4):348--360, 1992.

\bibitem{mah2005a}
Karl Mahlburg.
\newblock Partition congruences and the {A}ndrews-{G}arvan-{D}yson crank.
\newblock {\em Proc. Natl. Acad. Sci. USA}, 102(43):15373--15376 (electronic),
  2005.

\bibitem{sin2009a}
Jonah Sinick.
\newblock Canonical models for ${X}_0({N})$.
\newblock Seminar Talk Notes, Feb 2009.

\bibitem{swi1973a}
H.~P.~F. Swinnerton-Dyer.
\newblock On {$l$}-adic representations and congruences for coefficients of
  modular forms.
\newblock In {\em Modular functions of one variable, {III} ({P}roc. {I}nternat.
  {S}ummer {S}chool, {U}niv. {A}ntwerp, 1972)}, pages 1--55. Lecture Notes in
  Math., Vol. 350. Springer, Berlin, 1973.

\end{thebibliography}
\end{document}